\newtheorem{theorem}{Theorem}[section]
\newtheorem{lemma}[theorem]{Lemma}
\newtheorem{proposition}[theorem]{Proposition}
\theoremstyle{remark}
\newtheorem{remark}[theorem]{Remark}
\newcommand{\sh}{\mathcal}
\newcommand{\modu}{\mathcal}
\newcommand{\Pic}{\operatorname{Pic}}
\begin{document}
	
	\title[Interpolation on K3 and abelian surfaces]{Interpolation of fat points on K3 and abelian surfaces}
	
	\author[A. Zahariuc]{Adrian Zahariuc}
	\date{}
	\address{Department of Mathematics and Statistics, University of Windsor, 401 Sunset Ave, Windsor, ON, N9B 3P4, Canada}
	\email{adrian.zahariuc@uwindsor.ca}
	\keywords{SHGH Conjecture, tangential cover, K3 surface, abelian surface, ruled surface, Du Val curve}
	\subjclass[2020]{14C20, 14H42, 14J26, 14J27, 14J28}
	\thanks{\includegraphics[scale=0.3]{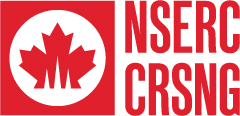} We acknowledge the support of the Natural Sciences and Engineering Research Council of Canada (NSERC), RGPIN-2020-05497. Cette recherche a \'{e}t\'{e} financ\'{e}e par le Conseil de recherches en sciences naturelles et en g\'{e}nie du Canada (CRSNG), RGPIN-2020-05497.}
	
	\maketitle
	
	\begin{abstract}
	We prove that any number of general fat points of any multiplicities impose the expected number of conditions on a linear system on a smooth projective surface, in several cases including primitive linear systems on very general K3 and abelian surfaces, `Du Val' linear systems on blowups of ${\mathbb P}^2$ at $9$ very general points, and certain linear systems on some ruled surfaces over elliptic curves. This is done by answering a question of the author about the case of only one fat point on a certain ruled surface, which follows from a circle of results due to Treibich--Verdier, Segal--Wilson, and others.
	\end{abstract}
	
	\section{Introduction}\label{section: introduction}
	
	The Segre--Harbourne--Gimigliano--Hirschowitz (SHGH) Conjecture \cite{[Se60], [Ha86], [Gi87], [Hi89]} is a well-known open problem in algebraic geometry. Please see \cite{[BM04]} for a survey of the history and various approaches to the conjecture. Questions similar to the SHGH Conjecture on projective surfaces other than ${\mathbb P}^2$ have been studied, for instance, in \cite{[DL05], [DL05a], [Du10], [Hu13], [La02], [La11], [Za19]}. Such questions also relate to Seshadri constants, though in general they require even more specific (non-asymptotic) information.
		
	This note is a follow-up to \cite{[Za19]}. We will show that, over the complex numbers, \cite[Conjecture 2.5]{[Za19]} can be deduced from certain facts (of a rather analytic nature), thereby establishing the results proved conditionally on this claim in \cite{[Za19]}. First, we state these results clearly and extend slightly the range of situations covered.
	
	\begin{theorem}\label{theorem: main theorem}
		Let $S$ be a smooth complex projective surface and ${\sh L}$ an invertible ${\sh O}_S$-module, such that $(S,{\sh L})$ is one of the following:
		\begin{enumerate}
		\item\label{situation item: K3} a very general primitively polarized K3 surface of any degree;
		\item\label{situation item: abelian surface} a very general primitively polarized abelian surface of any degree (i.e. a very general $(1,d)$-polarized abelian surface);
		\item\label{situation item: Du Val} the blowup of ${\mathbb P}^2$ at $9$ very general points, and a Du Val linear system, that is,
		$ c_1({\sh L}) = -kK_S + E$, where $E \simeq {\mathbb P}^1 \subset S$ is a $(-1)$-curve and $k$ is a positive integer;
		\item $S = {\mathbb P}{\sh E}$, where ${\sh E}$ is a rank $2$ locally free sheaf on a smooth projective curve $E$ of genus $1$, such that either
		\begin{enumerate}
		\item \label{situation item: split ruled} ${\sh E}={\sh O}_E \oplus {\sh J}$ with ${\sh J} \in \Pic^0(E)$ very general; or
		\item \label{situation item: Atiyah ruled} ${\sh E}$ is the unique nontrivial extension of ${\sh O}_E$ by ${\sh O}_E$,
		\end{enumerate}
		and ${\sh L}$ is any ample line bundle with intersection number $1$ with the section of $S$ corresponding to the natural surjection ${\sh E} \twoheadrightarrow {\sh O}_E$ (in either case).
		\end{enumerate}
	Let $p_1,\ldots,p_n \in S$ be $n$ general points, $m_1,\ldots,m_n$ positive integers, and 
	\begin{equation}\label{equation: notation for linear system}
	\left|{\sh L} ({m_1} ,\ldots, {m_n}) \right| = {\mathbb P}H^0(S,{\sh L} \otimes {\sh I}_{p_1,S}^{m_1} \otimes \cdots \otimes {\sh I}_{p_n,S}^{m_n}) \subset |{\sh L}| 
	\end{equation}
	the linear system of divisors in $|{\sh L}|$ which have multiplicity at least $m_i$ at $p_i$, for all $i=1,2\ldots,n$. Then
	$$ \dim \left|{\sh L} ({m_1} ,\ldots, {m_n}) \right| = \max\left\{ -1, \dim |{\sh L}| - \sum_{i=1}^n \frac{m_i(m_i+1)}{2} \right\}, $$
	with the convention that empty linear systems have dimension $-1$.
	\end{theorem}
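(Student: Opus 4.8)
The plan is to reduce the theorem, via the results of \cite{[Za19]}, to the single case of one fat point on the ruled surface of type \eqref{situation item: Atiyah ruled} --- which is essentially \cite[Conjecture 2.5]{[Za19]} --- and then to settle that case by means of the theory of tangential covers of elliptic curves.

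For the reduction I would invoke \cite{[Za19]}, where it is shown that, granting \cite[Conjecture 2.5]{[Za19]}, the conclusion holds in situations \eqref{situation item: K3}, \eqref{situation item: abelian surface} and \eqref{situation item: Du Val}: the passage from many fat points to one is a degeneration in the spirit of the Horace method, and the K3, abelian and rational elliptic surfaces are linked to the elliptic ruled surface by explicit specializations. I would then check that the same chain of arguments also accommodates the (mildly more general) situations \eqref{situation item: split ruled} and \eqref{situation item: Atiyah ruled}: for $S=\mathbb P{\sh E}\to E$ as in (4) the system $|{\sh L}|$ consists of multisections of $S\to E$, specializing $p_1,\dots,p_n$ onto one fibre reduces the count to $n=1$, and the split case \eqref{situation item: split ruled} degenerates to \eqref{situation item: Atiyah ruled} as ${\sh J}\to{\sh O}_E$, so that by upper semicontinuity of $\dim|{\sh L}(m_1,\dots,m_n)|$ in the points everything comes down to \cite[Conjecture 2.5]{[Za19]}.

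It thus remains to prove that, for $S=\mathbb P{\sh E}$ with ${\sh E}$ the Atiyah bundle, ${\sh L}$ ample with ${\sh L}\cdot\sigma=1$ ($\sigma$ the distinguished section), $p\in S$ general and $m\ge1$, one has $\dim|{\sh L}(m)|=\max\{-1,\,\dim|{\sh L}|-\binom{m+1}{2}\}$. Here ``$\ge$'' is automatic, since vanishing to order $m$ at $p$ imposes at most $\binom{m+1}{2}$ linear conditions; the content is the reverse inequality, i.e.\ that these conditions are independent once $\dim|{\sh L}|\ge\binom{m+1}{2}$ and that $|{\sh L}(m)|=\emptyset$ otherwise, and by semicontinuity it suffices to treat one well-chosen position of $p$. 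A member $C\in|{\sh L}|$ carries a degree-$d$ map $C\to E$ with $d={\sh L}\cdot f$ ($f$ a fibre), and $p_a(C)$ together with $\dim|{\sh L}|$ are read off from intersection numbers on $S$. The decisive step is the Treibich--Verdier dictionary: a curve on an elliptic ruled surface meeting the minimal section once is (the completion of) a tangential cover of $E$, and prescribing a point of multiplicity $m$ on it forces the cover to be tangential of the corresponding osculating order --- equivalently, the associated solution of the KP hierarchy to be an elliptic soliton with spectral data of a prescribed type. The Krichever construction, refined by the Segal--Wilson realization of the KP flows on the infinite Grassmannian, then identifies the locus of such curves with an explicit family of the expected dimension; reading this back yields both the independence of the conditions and the vanishing in the non-effective range.

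The main obstacle, I expect, is precisely this last translation. The cited analytic results (Segal--Wilson, Treibich--Verdier, and the surrounding circle) describe the \emph{structure} of the space of elliptic solitons, but extracting from them that an $m$-fold point costs \emph{exactly} $\binom{m+1}{2}$ conditions --- neither more nor fewer --- and that the relevant family is nonempty in exactly the predicted range requires matching the Treibich--Verdier parameter counts against the naive dimension count on $S$ and carefully controlling degenerate spectral curves. A secondary difficulty is confirming that the Horace-type degenerations of \cite{[Za19]} genuinely extend to the ruled surfaces of (4) with arbitrarily many points.
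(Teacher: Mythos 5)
Your overall outline coincides with the paper's: everything is funneled into the single--fat--point statement on the Atiyah ruled surface (\cite[Conjecture 2.5]{[Za19]}), which is then attacked through tangential covers and the Segal--Wilson theory. But the one step that constitutes the paper's actual mathematical content --- how to extract $k\geq\binom{m+1}{2}$ from that analytic theory --- is exactly the step you leave open (you call it ``the main obstacle''), and your proposed route to it (``matching the Treibich--Verdier parameter counts against the naive dimension count on $S$'') is not how it is, or apparently can be, done. The paper's mechanism is a proof by contradiction of a quite different flavour: if $Y\in|F_q+kE_\infty|$ has an $m$-fold point at $x$ with $k<\binom{m+1}{2}$, set $r=\binom{m+1}{2}-k-1\geq 0$, pass to the proper transform $C$ of $Y$ in the blowup at $x$, of arithmetic genus $g=k-\binom{m}{2}=m-1-r$, and use the transitive automorphism group of $S$ over translations of $E$ to produce for \emph{every} $s\in E$ a curve $Y(s,x)\in|F_s+kE_\infty|$ with an $m$-fold point at $x$. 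Cutting these on $C$ and adding $r$ auxiliary base points yields a family $\{{\sh L}_s\}_{s\in E}$ of \emph{effective} line bundles of degree exactly $g-1$ which is a coset of $\phi^*\Pic^0(E)$ in $\mathrm{Pic}^{g-1}(C)$; the tangency lemma for tangential covers (essentially \cite[Corollaire 3.10]{[TV91]}) identifies its tangent direction at every point with the KP direction $T_qC\subset H^1(C,{\sh O}_C)$, and the Segal--Wilson result \cite[Proposition 8.6]{[SW85]} --- that a disk flowing in the KP direction cannot lie entirely inside the theta divisor --- gives the contradiction, since every ${\sh L}_s$ has $h^0>0$. The only input the analytic theory provides is this non-containment in the theta divisor; one must engineer a family of effective degree-$(g-1)$ bundles to feed into it, and no such construction appears in your sketch.

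There are also problems with your reduction for the remaining cases. Letting ${\sh J}\to{\sh O}_E$ in the tautological family ${\mathbb P}({\sh O}\oplus{\sh P})$ degenerates the split surface to $E\times{\mathbb P}^1$ (the \emph{trivial} extension), not to the Atiyah surface; and the interpolation statement is false on $E\times{\mathbb P}^1$ itself (every divisor in the relevant system is a sum of fibres of the two projections, so an $m$-fold point costs only about $m$ conditions), so upper semicontinuity applied naively to that special fibre proves nothing. The paper instead runs the limit-linear-system machinery of \cite[Proposition 3.1]{[Za19]}: one shows the limit of the divisors with assigned multiplicities must contain the elliptic fibre through each limit point with multiplicity at least $\binom{m_i+1}{2}$, the Atiyah surface appearing only as the exceptional divisor of a blowup along such a fibre; this requires verifying a first-order non-splitness condition on the normal bundle sequence of the fibres (which is where ``very general ${\sh J}$'' enters), and, because the base of the elliptic fibration need not be ${\mathbb P}^1$, an additional one-dimensional interpolation lemma. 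The abelian case \ref{situation item: abelian surface} needs the analogous specialization to a product $E\times F$ with a first-order general Kodaira--Spencer direction, which your proposal does not address at all.
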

	
	To clarify, in cases \ref{situation item: K3} and \ref{situation item: abelian surface}, $|{\sh L}|$ is the \emph{primitive} linear system. For K3 surfaces, we confirm the conjectures of De Volder and Laface (Conjectures 2.1 and 2.3 in \cite{[DL05a]}) in the primitive case ($d=1$ with the notation in \cite{[DL05a]}), please see Remark \ref{remark: De Volder Laface}. Case \ref{situation item: Du Val} can equally well be stated in terms of ${\mathbb P}^2$, so we also obtain some cases of the SHGH Conjecture. Cases \ref{situation item: abelian surface} and \ref{situation item: split ruled} were not considered explicitly in \cite{[Za19]}, but they can be dealt with using the same idea, and seem natural enough to include. We have also removed the generality assumption on $E$ in \ref{situation item: Atiyah ruled}. To the author's knowledge, Theorem \ref{theorem: main theorem} is currently the only SHGH-type result with no restrictions on the number and weights of fat points.
		
	Using the arguments in \cite{[Za19]}, Theorem \ref{theorem: main theorem} boils down to case \ref{situation item: Atiyah ruled} of Theorem \ref{theorem: main theorem} and $n=1$, or specifically, the characteristic $0$ part of \cite[Conjecture 2.5]{[Za19]}. The reduction is discussed in \S\ref{section: proof of the main result}. 
	
	The conjecture in \cite{[Za19]} is stated as Proposition \ref{proposition: former conjecture} (in the form which removes the generality assumption on the $j$-invariant), and \S\ref{section: proof of the conjecture} is devoted to proving it. It turns out that the claim can be deduced from facts about \emph{tangential covers} of elliptic curves, that were studied by Treibich, Verdier, and other authors \cite{[Tr89], [Tr93a], [Tr93b], [TV89], [TV91], [Se23], [FT17]}. From the point of view of SHGH-type problems, the situation is quite unexpected, since the key fact comes from analytic work of Segal and Wilson \cite{[SW85]}, and it seems that no algebraic proofs are currently available. 
		
	\subsection*{Acknowledgements} I would like to thank Xi Chen, Brian Osserman, Edoardo Sernesi, and Armando Treibich for useful discussions. I am especially grateful to Sernesi and Treibich for invaluable help with navigating the literature.

	\section{Proof of the conjecture from \cite{[Za19]}}\label{section: proof of the conjecture}
	
	\subsection{Finiteness of $\theta$ vanishing in the KP direction}
		
		Let $C$ be an integral (possibly singular) complex projective curve of arithmetic genus $g \geq 1$, and $p \in C$ a smooth point. There is a canonical identification of the tangent line $T_pC$ to $C$ at $p$ with ${\sh O}_C(p) \otimes {\sh O}_p$. The exact sequence
		$$ 0 \longrightarrow {\sh O}_C \longrightarrow {\sh O}_C(p) \longrightarrow {\sh O}_C(p) \otimes {\sh O}_p \longrightarrow 0 $$
		gives a natural embedding $T_pC \hookrightarrow H^1({\sh O}_C)$ as the first coboundary map in the associated long exact sequence, whose image thus coincides with the kernel of the map $H^1({\sh O}_C) \to H^1({\sh O}_C(p))$. Since $ {\sh T}_{\Pic(C)} \cong H^1(C,{\sh O}_C) \otimes {\sh O}_{\Pic(C)}$, 
		the point $p$ induces a natural direction at any point in $\Pic(C)$, which is sometimes called the \emph{Kadomtsev-Petviashvili (KP) direction}.
		
		The essential ingredient in our arguments is the following.
		
	\begin{theorem}\label{theorem: SW}
	Let $\Delta \hookrightarrow \mathrm{Pic}^{g-1}(C)$ be a holomorphic map from an open disk, such that the tangent line to any point of $\Delta$ coincides with the KP direction $T_pC \subset H^1({\sh O}_C)$. Then there exists ${\sh L} \in \Delta$ such that $H^0(C,{\sh L}) = 0$.
	\end{theorem}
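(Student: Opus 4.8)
The statement says that the theta divisor of $C$ contains no germ of a holomorphic curve whose tangent line is everywhere the KP direction; in keeping with the title of the subsection, I would actually aim for the sharper conclusion that along any such curve $H^0(C,\cdot)$ can jump only on a discrete set. I would argue by contradiction and move everything into the language of the KP hierarchy.

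\emph{Reduction to a translate of the KP line.} For a singular integral curve the exponential map $H^1({\sh O}_C)\to\mathrm{Pic}^0(C)$ is still a universal covering, so after fixing a base point one obtains a universal covering $\pi\colon H^1({\sh O}_C)\to\mathrm{Pic}^{g-1}(C)$. Since $\Delta$ is simply connected, lift the given map to $\tilde f\colon\Delta\to H^1({\sh O}_C)$; the hypothesis forces $\tilde f'(t)$ to be a nonzero multiple of a fixed vector $v$ spanning $T_pC\subset H^1({\sh O}_C)$ for all $t$, so $\tilde f(t)=c+\phi(t)v$ with $\phi$ holomorphic and $\phi'$ nowhere zero, and after shrinking and reparametrizing $\Delta$ we may assume $\tilde f(t)=c+tv$. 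Set $\Theta=\{{\sh M}\in\mathrm{Pic}^{g-1}(C):H^0(C,{\sh M})\neq0\}$. On an integral curve a nonzero section of a degree-$(g-1)$ line bundle realizes it as ${\sh O}_C(D)$ for an effective Cartier divisor $D$ of degree $g-1$, and such line bundles sweep out a subvariety of dimension $<g=\dim\mathrm{Pic}^{g-1}(C)$, so $\Theta$ is a proper closed analytic subset; hence $\pi^{-1}(\Theta)$ is closed analytic in $H^1({\sh O}_C)$, and meets the affine line $L_c=c+{\mathbb C}v$ either in a discrete set --- in which case the open arc $\tilde f(\Delta)$ of $L_c$ is certainly not contained in $\Theta$ --- or in all of $L_c$. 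So it suffices to rule out $\pi(L_c)\subseteq\Theta$, for every $c\in H^1({\sh O}_C)$.

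\emph{Passage to tau functions.} Fix a formal local parameter $z$ at $p$ whose associated tangent vector is $v$. By the Krichever construction --- which is available for integral, possibly singular, curves --- a line bundle ${\sh M}$ of degree $g-1$ together with a formal trivialization near $p$ yields a point $W({\sh M})$ of the Segal--Wilson Grassmannian, and hence a tau function $\tau_{{\sh M}}$ of the KP times; by \cite{[SW85]} this is a well-defined holomorphic function. The dictionary between spectral data and solutions of the KP hierarchy, in the form appropriate to singular integral curves (as developed in the work of Treibich, Verdier and others cited above), gives
\begin{equation*}
\tau_{{\sh M}}(t_1,t_2,\ldots)=\exp\bigl(Q(t)\bigr)\cdot\vartheta\Bigl(\textstyle\sum_{j\ge 1}t_jU_j+e({\sh M})\Bigr),
\end{equation*}
where $Q$ is a quadratic form, $U_1=v$ is the KP direction, the remaining $U_j\in H^1({\sh O}_C)$ are the higher KP directions, $e\colon\mathrm{Pic}^{g-1}(C)\to H^1({\sh O}_C)$ is a section of $\pi$, and $\vartheta$ is a (generalized) theta function of $C$ with zero locus $\pi^{-1}(\Theta)$. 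Setting $t_2=t_3=\cdots=0$, and noting that $\exp(Q(t_1,0,\ldots))$ is a nowhere-zero function of $t_1$, we find that $\pi(L_c)\subseteq\Theta$ would force $\tau_{{\sh M}}(t_1,0,0,\ldots)\equiv 0$ in $t_1$, for ${\sh M}=\pi(c)$.

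\emph{The analytic input, and conclusion.} It remains to show that $\tau_{{\sh M}}(t_1,0,0,\ldots)$ is not identically zero as a function of the single KP time --- equivalently, that the Baker--Akhiezer function of $(C,p,z,{\sh M})$, whose denominator (when only the first time is present) is $\tau_{{\sh M}}(t_1,0,0,\ldots)$, is well-defined. This is not a formal consequence of $\tau_{{\sh M}}\not\equiv 0$: setting all times but the first to zero collapses all Schur functions of a given degree to proportional monomials, leaving room for cancellation; the statement genuinely rests on \cite{[SW85]}, and for singular $C$ no algebraic argument seems to be available. Granting it, $t_1\mapsto\tau_{{\sh M}}(t_1,0,0,\ldots)$ is holomorphic with isolated zeros, which contradicts the previous paragraph; since $c$ was arbitrary, the theorem follows, together with the asserted finiteness. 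I expect this last step --- isolating from \cite{[SW85]} and its circle the exact non-vanishing that survives restriction to the single time $t_1$ and that applies to an \emph{arbitrary} line bundle on a singular integral curve, not only to the big cell, and, upstream of it, setting up the generalized-theta form of the Krichever dictionary for such curves --- to be the real work; the rest is bookkeeping around results already in the literature.
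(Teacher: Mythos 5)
Your proposal is correct and takes essentially the same route as the paper: both reduce the claim to the statement that the (generalized) theta function is not identically zero along the first KP flow, which is exactly \cite[Proposition 8.6]{[SW85]} combined with the Krichever dictionary in the form worked out by Treibich--Verdier, and the paper's proof is simply a citation of those results. Your extra bookkeeping (lifting to $H^1(C,{\sh O}_C)$, the discrete-or-everything dichotomy for $\pi^{-1}(\Theta)\cap L_c$) is sound, and you correctly isolate the genuinely analytic, non-formal input.
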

	
	\begin{proof}
	Follows from \cite[Proposition 8.6]{[SW85]} and the `Krichever dictionary', as stated and explained in \cite[\S3.4]{[Tr93a]}. Indeed, after compactifying the Jacobian in order to have the same setup, the first two lines on page 45 of \cite{[Tr93a]} imply our claim, since we are claiming precisely that $\Delta$ is not contained in the theta divisor. Note also that Proposition 3.7 on the same page provides the Segal--Wilson formula for the vanishing/contact order.
	\end{proof}
	
	\begin{remark} We will only require the case when $C$ has at worst planar singularities, in which the theory is somewhat simpler \cite[p. 38]{[SW85]}. In fact, one can even make a certain plausible conjecture later, which would ensure that we only invoke Theorem \ref{theorem: SW} for $C$ smooth. When $C$ is smooth, Theorem \ref{theorem: SW} also follows from results of Fay \cite{[Fa84]}. However, it is stated in \cite[footnote to p. 326]{[BV03]} that a direct geometric proof is not known even this case.
	\end{remark}
	
	\subsection{Conjecture 2.5 in \cite{[Za19]} in characteristic $0$}\label{S: 2.2}
	
	Let $E$ be a smooth complex projective curve of genus $1$, $\rho:S = {\mathbb P} {\sh V} \to E$ the ruled surface over $E$, where ${\sh V}$ is the unique rank $2$ vector bundle that fits in a non-split short exact sequence 
	$$ 0 \longrightarrow {\sh O}_E \longrightarrow {\sh V} \longrightarrow {\sh O}_E \longrightarrow 0, $$
	$E_\infty \subset S$ the section corresponding to ${\sh V} \to {\sh O}_E \to 0$, $q \in E$, and $F_s=\rho^{-1}(s)$, for any $s \in E$. There is a simple way to characterize curves which occur in $|F_q+kE_\infty|$ due to Treibich and Verdier \cite{[TV91]}: the projection map from the curve to $E$ must be so-called \emph{tangential}. We state the result in a form closer to \cite[Lemma 3.3]{[Za22]} (the author was unfortunately unaware of the literature when \cite{[Za22]} was written).
	
	\begin{lemma}[essentially {\cite[Corollaire 3.10]{[TV91]}}]\label{lemma: tangential}
		Let $C$ a projective integral curve, and $f:C \to S$ a morphism such that $f^{-1}(E_\infty) = \{q\}$ scheme-theoretically, and $q$ is a nonsingular point of $C$. Then the composition
		$$ H^1(E,{\sh O}_E) \xrightarrow{(\rho f)^*} H^1(C,{\sh O}_C) \longrightarrow H^1(C,{\sh O}_C(q)) $$
		is equal to $0$.
	\end{lemma}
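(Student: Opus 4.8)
The plan is to exploit the hypothesis $f^{-1}(E_\infty) = \{q\}$ as a scheme-theoretic fiber in order to identify the line bundle $f^*\mathcal{O}_S(E_\infty)$ on $C$ with $\mathcal{O}_C(q)$, and then to trace through the cohomology of the defining extension of $\mathcal{V}$ pulled back to $C$. First I would observe that the section $E_\infty$ is the zero locus of a canonical section of $\mathcal{O}_S(E_\infty)$, and that $\rho_*\mathcal{O}_S(E_\infty) = \mathcal{V}^{\vee}$ (or $\mathcal{V}$, up to the usual normalization convention for $\mathbb{P}\mathcal{V}$), so that the tautological sequence $0 \to \mathcal{O}_S \to \rho^*\mathcal{V}^{\vee}(E_\infty)\dots$ on $S$ restricts, via the non-split sequence $0 \to \mathcal{O}_E \to \mathcal{V} \to \mathcal{O}_E \to 0$, to relate $\mathcal{O}_S(E_\infty)$, $\rho^*\mathcal{O}_E$, and the extension class. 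Concretely, on $S$ one has a short exact sequence of the form
$$ 0 \longrightarrow \rho^*\mathcal{O}_E \longrightarrow \mathcal{O}_S(E_\infty) \longrightarrow \mathcal{O}_S(E_\infty)|_{E_\infty} \longrightarrow 0, $$
and the point is that the connecting data is governed by the Atiyah extension class $\epsilon \in \mathrm{Ext}^1(\mathcal{O}_E,\mathcal{O}_E) = H^1(E,\mathcal{O}_E)$, which is nonzero precisely because $\mathcal{V}$ is the non-split (Atiyah) bundle.

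Next I would pull everything back along $f : C \to S$. Since $f^{-1}(E_\infty) = \{q\}$ scheme-theoretically and $q$ is a smooth point of $C$, the restriction $f^*\mathcal{O}_S(E_\infty) \cong \mathcal{O}_C(q)$, and the canonical section pulls back to a section of $\mathcal{O}_C(q)$ vanishing simply at $q$ — i.e. to the standard inclusion $\mathcal{O}_C \hookrightarrow \mathcal{O}_C(q)$. Pulling back the displayed sequence on $S$ gives a commutative diagram comparing
$$ 0 \longrightarrow \mathcal{O}_C \longrightarrow \mathcal{O}_C(q) \longrightarrow \mathcal{O}_C(q)\otimes\mathcal{O}_q \longrightarrow 0 $$
with the pullback of the $S$-level sequence, and the upshot is that the extension class of the pulled-back sequence in $\mathrm{Ext}^1_C(\mathcal{O}_C(q)\otimes\mathcal{O}_q, \mathcal{O}_C)$ — equivalently, after the canonical identification of $T_qC$ with $\mathcal{O}_C(q)\otimes\mathcal{O}_q$, an element of $H^1(C,\mathcal{O}_C)$ — is exactly $(\rho f)^*\epsilon$. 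But this same extension class, read off from the bottom sequence, is the image of $T_qC$ under the coboundary $T_qC \hookrightarrow H^1(C,\mathcal{O}_C)$, whose image is the kernel of $H^1(\mathcal{O}_C) \to H^1(\mathcal{O}_C(q))$. Therefore $(\rho f)^*\epsilon$ lies in that kernel; and since $\epsilon$ spans $H^1(E,\mathcal{O}_E)$ (it is nonzero in a one-dimensional space), this says precisely that the composite $H^1(E,\mathcal{O}_E) \xrightarrow{(\rho f)^*} H^1(C,\mathcal{O}_C) \to H^1(C,\mathcal{O}_C(q))$ kills a generator, hence is zero.

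The step I expect to be the main obstacle is making the identification of extension classes genuinely canonical — i.e., checking that the Atiyah class $\epsilon$ really is the class that appears when one pulls back $\mathcal{O}_S(E_\infty)$ and compares it with $\rho^*\mathcal{O}_E$, with the correct sign and normalization, and that this survives the pullback along $f$ without sign ambiguity. This requires being careful about the convention for $\mathbb{P}\mathcal{V}$ (quotient vs. sub), about which surjection $\mathcal{V} \twoheadrightarrow \mathcal{O}_E$ defines $E_\infty$, and about the identification $T_pC \cong \mathcal{O}_C(p)\otimes\mathcal{O}_p$ already set up in the excerpt. A clean way to sidestep some of this is to work directly on $S$: restrict the tautological extension to a formal or étale neighborhood of $E_\infty$, note that its class in $H^1(E_\infty,\mathcal{O}) = H^1(E,\mathcal{O}_E)$ is $\epsilon \ne 0$ by non-splitness of $\mathcal{V}$, and then argue that $f$ pulls this local picture back to the local picture of $\mathcal{O}_C(q)$ near $q$ because $f$ is transverse to $E_\infty$ there (the scheme-theoretic fiber condition). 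Granting that functoriality, the conclusion is immediate, and the remaining work is purely diagram-chasing in the long exact sequences, which I would not spell out in detail.
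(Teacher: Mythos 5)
Your argument is correct and is essentially the paper's own proof: both hinge on the identification $f^*\mathcal{O}_S(E_\infty)\cong\mathcal{O}_C(q)$ together with the fact that non-splitness of $\mathcal{V}$ kills the map $H^1(S,\mathcal{O}_S)\to H^1(S,\mathcal{O}_S(E_\infty))$ (equivalently, places $\rho^*\epsilon$ in the image of the coboundary from $H^0(N_{E_\infty/S})$), transported to $C$ by functoriality; the paper imports that vanishing from \cite[Lemma 3.3]{[Za22]} and runs the resulting commutative square, while you unpack it via the pushed-forward tautological extension. The only loose point is your identification of the class of the pulled-back sequence, which lives in $\mathrm{Ext}^1_C(\mathcal{O}_C(q)\otimes\mathcal{O}_q,\mathcal{O}_C)$ rather than in $H^1(C,\mathcal{O}_C)$ -- the statement you actually need, namely that $(\rho f)^*\epsilon$ is the image under the connecting map $T_qC\to H^1(C,\mathcal{O}_C)$ of the pullback of the canonical section of $N_{E_\infty/S}$, is exactly what functoriality of the long exact sequence provides, so the conclusion stands.
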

	
	\begin{proof}
		As in the proof of \cite[Lemma 3.3]{[Za22]}, the map $H^1(S,{\sh O}_S) \to H^1(S,{\sh O}_S(E_\infty))$ induced by the inclusion ${\sh O}_S \subset {\sh O}_S(E_\infty)$ is equal to $0$. The commutativity of
			\begin{center}
			\begin{tikzpicture}[scale = 1]
				\node (nww) at (-3,1.3) {$H^1(E,{\sh O}_E)$};
				\node (sw) at (0,0) {$H^1(S,{\sh O}_S(E_\infty))$};
				\node (nw) at (0,1.3) {$H^1(S,{\sh O}_S)$};
				\node (se) at (3,0) {$H^1(C,{\sh O}_C(q))$};
				\node (ne) at (3,1.3) {$H^1(C,{\sh O}_C)$};
				\draw [->] (nww) to node[midway, above] {$\rho^*$} (nw); 
				\draw [->] (nw) to node[midway, above] {$f^*$} (ne); 
				\draw [->] (ne) to (se);
				\draw [->] (nw) to node[midway, right] {$0$} (sw); 
				\draw [->] (sw) to (se);
			\end{tikzpicture}
		\end{center}
		completes the proof.
	\end{proof}
	
	\begin{proposition}[{\cite[Conjecture 2.5]{[Za19]}}]\label{proposition: former conjecture}
		Let $x \in S$ be a general (closed) point and $m$ a positive integer. Let $k$ be the minimum positive integer for which the linear system $|F_q+kE_\infty|$ contains a curve of multiplicity at least $m$ at $x$. Then
		$$ k = \frac{m(m+1)}{2}. $$
	\end{proposition}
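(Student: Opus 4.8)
The plan is to prove the inequality "$\le$" by a Riemann--Roch count and "$\ge$" by contradiction, the latter reducing, via a component analysis and Lemma \ref{lemma: tangential}, to a statement about tangential covers which is closed off by Theorem \ref{theorem: SW}.

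First, "$k \le m(m+1)/2$" is immediate: $F_q + kE_\infty$ is ample for $k \ge 1$, $\chi(\mathcal{O}_S) = 0$, and one computes $(F_q + kE_\infty)^2 = 2k$, $(F_q + kE_\infty)\cdot K_S = -2$ (here $K_S \equiv -2E_\infty$, using $N_{E_\infty/S} \cong \mathcal{O}_E$), so Riemann--Roch gives $\dim|F_q + kE_\infty| = k$; imposing multiplicity $\ge m$ at one point costs at most $\binom{m+1}{2}$ conditions, hence for $k = \binom{m+1}{2}$ and \emph{any} $x$ the system $|(F_q + kE_\infty)(mx)|$ is non-empty. The content is the reverse: for $1 \le k < \binom{m+1}{2}$ and general $x$, no $D \in |F_q + kE_\infty|$ has an $m$-fold point at $x$. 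Assume the contrary and fix such a pair $(x, D)$.

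Next I would reduce to an integral curve. A general $x$ avoids $E_\infty$, $F_q$, every fibre $F_s$ with $s \ne \rho(x)$, and the finitely many rigid curves in the systems $|dE_\infty|$ (one for each $d$, since $\mathcal{V}$ is indecomposable) that are relevant to the bound. Splitting $D$ into vertical and horizontal parts and using $D \cdot F = k$, $D \cdot E_\infty = 1$, that $F_s \not\sim F_{s'}$ on $S$ for $s \ne s'$, that no two sections of $S$ are disjoint (indecomposability of $\mathcal{V}$), and that $N_{E_\infty/S} \cong \mathcal{O}_E$, one finds that after deleting components of $D$ not through $x$ (which do not change $\mathrm{mult}_x D$) and replacing $k$ by some $1 \le k' \le k$, the curve $D$ is integral, $\rho|_D \colon D \to E$ is finite of degree $k$, $D \cap E_\infty$ is the single reduced point over $q$ and is a smooth point of $D$, and still $\mathrm{mult}_x D \ge m$. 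Thus $p_a(D) = k$, and since $\mathrm{mult}_x D \ge m$ the geometric genus of $D$ is $g' := k - \delta(D) \le k - \binom{m}{2} \le m - 1$.

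Now Lemma \ref{lemma: tangential} applies to $D \hookrightarrow S$: the composite $H^1(\mathcal{O}_E) \xrightarrow{(\rho|_D)^*} H^1(\mathcal{O}_D) \to H^1(\mathcal{O}_D(q))$ vanishes. Since $(\rho|_D)^*$ is injective on $H^1(\mathcal{O})$ (its transpose, the trace $H^0(\omega_D) \to H^0(\omega_E)$, is onto, as pullback followed by trace is multiplication by $k$ on the one-dimensional $H^0(\omega_E)$), its image is exactly the KP line $T_q D = \ker\bigl(H^1(\mathcal{O}_D) \to H^1(\mathcal{O}_D(q))\bigr)$; equivalently, passing to the normalization $\hat\rho\colon \widehat{D} \to E$, the cover $\hat\rho$ is \emph{tangential} of degree $k$ and genus $g'$. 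Exponentiating, the KP vector field on $\mathrm{Pic}(\widehat{D})$ integrates to translation by the elliptic subgroup $\iota(E) := \hat\rho^*(\mathrm{Pic}^0 E)$, so Theorem \ref{theorem: SW} says that \emph{no translate of $\iota(E)$ lies in the theta divisor $\Theta \subset \mathrm{Pic}^{g'-1}(\widehat{D})$}. Moreover $D$ is the image of $\widehat{D}$ under a section $\sigma$ of the ruled surface $\mathbb{P}(\hat\rho^*\mathcal{V}^\vee) \to \widehat{D}$, given by a surjection $\hat\rho^*\mathcal{V} \twoheadrightarrow \mathcal{O}_{\widehat{D}}(q)$; using the vanishing from Lemma \ref{lemma: tangential} one checks that such surjections form a $\mathbb{P}^1$ (without tangentiality there is a unique one), and this $\mathbb{P}^1$ — together with the choices of the tangential cover, of $q$, of the base point $\rho(x)$, and of the $m$ points of $\widehat{D}$ over $\rho(x)$ that $\sigma$ glues to the $m$-fold point — is where all the moduli sit.

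The crux is then a dimension count. Each $\sigma(\widehat{D})$ has geometric genus $g'$, hence carries exactly $\binom{m}{2}$ worth of singularity, and for $(x,D)$ with $x$ general this must be concentrated into an $m$-fold point at $x$; imposing the $m-1$ gluing conditions on the parameters above forces the family of degree-$k$, genus-$g'$ tangential covers of $E$ to have dimension $\ge m-1$. The naive Hurwitz count — covers of $E$ of genus $g'$ form a $(2g'-2)$-dimensional family, and tangentiality (the cover direction $\mathrm{im}(\hat\rho^*)$ lying on the canonical curve of $\widehat{D}$) is $g'-2$ further conditions — gives dimension $g' \le m-1$, which fails to yield a contradiction precisely in the extreme case $g' = m-1$, i.e. $k = \binom{m+1}{2} - 1$. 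This is exactly where Theorem \ref{theorem: SW} is indispensable: the elementary estimate only gives $g' \ge 1$ (a cover of $E$ has genus $\ge 1$), i.e. $k \ge \binom{m}{2} + 1$, and shaving the last unit of dimension off the family of tangential covers — equivalently, producing the extra $\theta$-vanishing via the Segal--Wilson contact-order formula in Treibich's dictionary — does not seem available by algebraic means. With $g' \ge m$, i.e. $k \ge \binom{m+1}{2}$, the contradiction is reached, so $k = \binom{m+1}{2}$. I expect this final codimension estimate for tangential covers, together with the attendant bookkeeping for the gluing conditions, to be the main obstacle.
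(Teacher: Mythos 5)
Your easy upper bound and your identification of the two external inputs (Lemma \ref{lemma: tangential} and Theorem \ref{theorem: SW}) agree with the paper, and your reformulation of Theorem \ref{theorem: SW} --- no translate of $\hat\rho^*(\mathrm{Pic}^0 E)$ lies in the theta divisor --- is exactly the right way to use it. But you never actually exhibit such a translate inside the theta divisor. Instead you veer into a Hurwitz-type dimension count on the moduli of tangential covers, whose key codimension estimate you yourself flag as ``the main obstacle'' and leave unproved. As set up, that count cannot close: the $(2g'-2)$-dimensionality of the space of covers, the $(g'-2)$ conditions for tangentiality, and the transversality of the $m-1$ gluing conditions are all asserted without justification, and even granting them you concede the argument fails at $k=\binom{m+1}{2}-1$. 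This is a genuine gap, not bookkeeping.

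The missing idea is a translation trick that turns the hypothesis into a one-parameter family of effective divisor classes on a \emph{single fixed} curve. Fix one $Y \in |F_q+kE_\infty|$ with an $m$-fold point at $x$, let $C$ be its proper transform in the blowup of $S$ at $x$, of arithmetic genus $g = k - \binom{m}{2} = m-1-r$ where $r = \binom{m+1}{2}-k-1 \geq 0$. The subgroup of $\mathrm{Aut}(S)$ lying over translations of $E$ acts transitively on $S \setminus E_\infty$, so transporting the hypothesis along these automorphisms shows that for \emph{every} $s \in E$ the system $|F_s+kE_\infty|$ contains a curve $Y(s,x)$ with an $m$-fold point at the same point $x$. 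Restricting the proper transforms $Y'(s,x)$ to the fixed $C$ and adding $r$ auxiliary smooth points $p_1,\dots,p_r$ of $C$ yields a family $\{\mathcal{L}_s\}_{s \in E}$ of \emph{effective} classes of degree exactly $C^2+r = g-1$, which is simultaneously a coset of $\phi^*(\Pic^0(E))$ in $\mathrm{Pic}^{g-1}(C)$. By Lemma \ref{lemma: tangential} its tangent direction at every point is the KP direction, so Theorem \ref{theorem: SW} forces some $\mathcal{L}_s$ to have $h^0=0$, contradicting effectivity. This single application of Theorem \ref{theorem: SW} disposes of every $k < \binom{m+1}{2}$ at once, with no moduli count of tangential covers needed.
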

	
	Here is a casual summary of the proof. Instead of having $q$ fixed and $x$ variable, we may equally well consider $x$ fixed and $q$ variable, thanks to the automorphisms of $S$. Imagine a hypothetical family of high multiplicity curves at $x$ intersecting $E_\infty$ at the variable point $q$, which we partially normalize by blowing up at $x$. The family of effective divisors cut by this family of curves on one of the curves in the family has certain features that contradict properties of tangential covers obtained by combining Lemma \ref{lemma: tangential} and Theorem \ref{theorem: SW}.
	
	\begin{proof}
	It is obvious that $k \leq {m+1 \choose 2}$, since $h^0({\sh O}_S(F_q+kE_\infty)) = k+1$ by e.g. \cite[Proposition 2.3]{[Za19]}. Indeed, the map
	$$ H^0(S,{\sh O}_S(F_q+kE_\infty)) \longrightarrow H^0(S,{\sh O}_S(F_q+kE_\infty) \otimes {\sh O}_S/{\sh I}_{x,S}^m) $$
	connot be injective as soon as $k \geq {m+1 \choose 2}$ for obvious dimension reasons. 
	
	Assume by way of contradiction that the inequality was strict, and let 
	$$ r = {m+1 \choose 2} - k -1 \geq 0. $$
	Let $Y \in |F_q+kE_\infty|$ of multiplicity at least $m$ at $x$. Since all divisors in $|F_q+kE_\infty|$ are sums of an integral curve with a multiple of $E_\infty$ (in characteristic $0$, e.g. \cite[\S2.2]{[Za19]}), $Y$ must be integral by the minimality assumption on $k$. Moreover, by the genus-degree formula,
	\begin{equation}\label{equation: simple estimate}
		k= p_a(Y) \geq p_g(Y) + {\mathrm{mult}_xY \choose 2} \geq 1+ {\mathrm{mult}_xY \choose 2},
	\end{equation}
	so the multiplicity of $Y$ at $x$ must be precisely $m$. 
	
	Let $\beta:S' \to S$ be the blowup of $S$ at $x$ with exceptional curve $W \subset S'$, $C \subset S'$ the proper transform of $Y$, and $f:C \to S$ the restriction of $\beta$ to $C$. Let $g=p_a(C)$. We have $C \sim \beta^*Y - mW$ on $S'$, so
	\begin{equation}\label{equation: C^2} C^2 = (\beta^*F_q+k\beta^*E_\infty - mW)^2 = -m^2 +2k \end{equation}
	and
	$$ C \cdot K_{S'} =  (\beta^*F_q+k\beta^*E_\infty - mW) \cdot (-2\beta^*E_\infty + W) = m-2. $$
	Hence, by the genus-degree formula,
	\begin{equation}\label{equation: genus of C} 
	g = 1 + \frac{C^2+C\cdot K_{S'}}{2} = k-{m \choose 2} = m - 1 - r. 
	\end{equation}
	It is also clear that $g \geq 1$ since $C$ maps non-constantly to $E$.
	
	We claim that for any $s \in E$, the linear system $|F_{s}+kE_\infty|$ contains a divisor $Y(s,x)$ of multiplicity $m$ at $x$. Let $y \in S \backslash E_\infty$ such that 
	\begin{equation}\label{equation: choice of other point}
	\rho(y) = \rho(x) + s-q \in A_0(E).
	\end{equation}
	Recall that the subgroup of $\mathrm{Aut}(S)$ consisting of automorphisms which lie above translation automorphisms of $E$ acts transitively on $S \backslash E_\infty$, e.g. \cite[\S2.1]{[Za19]}, and choose such an automorphism $\psi$ such that $\psi(y) = x$. For $y$ general, and therefore for $y$ arbitrary by semi-continuity, there exists a divisor $Y(q,y) \in |F_q+kE_\infty|$ with multiplicity at least $m$ at $y$ by assumption. Then we define 
	$$ Y(s,x) = \psi(Y(q,y)) \subset S, $$ 
	and note that \eqref{equation: choice of other point} implies that $Y(s,x) \in |F_{s}+kE_\infty|$. Moreover, the multiplicity of $Y(q,y)$ at $y$, and therefore of $Y(s,x)$ at $x$, must be precisely $m$ by arguments similar to the ones above, cf. \eqref{equation: simple estimate}. (For special $s$, it could happen in principle that $Y(q,y)$ contains $E_\infty$, but it doesn't really matter; we may consider the analogue of \eqref{equation: simple estimate} for the irreducible component different from $E_\infty$.) Then, if $Y'(s,x) \subset S'$ is the proper transform of $Y(s,x)$, we have
	\begin{equation}\label{equation: Y' class} Y'(s,x) \sim \beta^*Y(s,x) -mW \sim  \beta^*F_s+k\beta^*E_\infty - mW. \end{equation}
	Let $p_1,\ldots,p_r \in C$ be $r$ arbitrary nonsingular points on $C$, and let
	\begin{equation}\label{equation: definition of L_s}
	{\sh L}_s = \phi^*{\sh O}_E(s) \otimes {\sh O}_C(kq+p_1+\cdots+p_r) \otimes ({\sh O}_{S'}(-mW)|_C),
	\end{equation}
	where $\phi = \rho \circ f$. This definition makes clear that $\{{\sh L}_s:s \in E\}$ is a curve in $\mathrm{Pic}(C)$, though we prefer to think of it as 
	\begin{equation}\label{equation: effective L_s} {\sh L}_s = {\sh O}_C(p_1+\cdots+p_r) \otimes f^* {\sh O}_{S'}(Y'(s,x)), \end{equation}
	which is equivalent by \eqref{equation: Y' class}. Then $h^0({\sh L}_s) > 0$ by \eqref{equation: effective L_s} (for $s \neq q$, but in particular also for $s=q$ by semi-continuity), and
	\begin{equation*} \deg {\sh L}_s= C \cdot Y'(s,x) + r = C^2+r = 2k -m^2 + r= m-2-r = g-1 \end{equation*}
	by \eqref{equation: effective L_s}, \eqref{equation: C^2} and \eqref{equation: genus of C}. 
	
	To summarize, the curve $ Z: = \{ {\sh L}_s: s \in E \} \subset \mathrm{Pic}^{g-1}(C) $ is a family of \emph{effective} degree $g-1$ Cartier divisors on $C$, and at the same time a coset of $\phi^*(\mathrm{Pic}^0(E))$ in $\mathrm{Pic}(C)$ by \eqref{equation: definition of L_s}. It follows that
	$$ T_{[{\sh L}_s]}Z \subset T_{[{\sh L}_s]} \mathrm{Pic}^{g-1}(C) = H^1(C,{\sh O}_C) $$ 
	is the image of $H^1(E,{\sh O}_E) \to H^1(C,{\sh O}_C)$, which is in fact the KP direction
	$$ T_qC = H^0(C,{\sh O}_C(q) \otimes {\sh O}_q) \hookrightarrow H^1(C,{\sh O}_C) $$ 
	by Lemma \ref{lemma: tangential}. This contradicts Theorem \ref{theorem: SW}, since $h^0({\sh L}_s)>0$ for all $s \in E$.
	\end{proof}
	
	Note that Proposition \ref{proposition: former conjecture} not only proves \cite[Conjecture 2.5]{[Za19]} in characteristic $0$, but strengthens it by removing the generality assumption on the $j$-invariant.
	
	\section{Proof of the main result}\label{section: proof of the main result}
	
	With \cite[Conjecture 2.5]{[Za19]} proven, cases \ref{situation item: K3} and \ref{situation item: Atiyah ruled} of Theorem \ref{theorem: main theorem} follow directly from \cite[Theorem 3.3]{[Za19]}, and in essence so does case \ref{situation item: Du Val}, since the formulation in terms of ${\mathbb P}^2$ given in \cite[Theorem 3.3]{[Za19]} is equivalent to the formulation in terms of blowups of ${\mathbb P}^2$ at $9$ points in Theorem \ref{theorem: main theorem}. Indeed,
	$$ {\sh L} = {\sh O}_S(3kH - kE_1 - \cdots -kE_8 - (k-1)E_9), $$
	where $H$ is the pullback of the class of a line in ${\mathbb P}^2$, and $E_1,\ldots,E_9 \subset S$ are the $9$ exceptional curves, since we may assume $E=E_9$ by Cremona transformations. 
			
	This leaves cases \ref{situation item: abelian surface} and \ref{situation item: split ruled}. If we construct suitable specializations of $(S,{\sh L})$ in situations \ref{situation item: abelian surface} and \ref{situation item: split ruled} satisfying the conditions in \cite[Proposition 3.1]{[Za19]}, then we are done. Such specializations exist, with the ultimately minor caveat that, in the abelian surface case, the base of the elliptic fibration as in loc. cit. is not ${\mathbb P}^1$, but an elliptic curve instead. Thus, we need a minor extension of \cite[Proposition 3.1]{[Za19]}. 
	
	\begin{proposition}\label{proposition: elliptic degeneration}
		Let $\pi:X \to B$ be a smooth projective family of surfaces over a smooth quasi-projective curve $B$, and let ${\sh L}_X \in {\mathrm{Pic}}(X)$ relatively ample over $B$. Let $X_t = \pi^{-1}(t)$ and ${\sh L}_t = {\sh L}_X|_{X_t}$ for any $t \in B$. Fix $b \in B$ closed. Assume that 
		\begin{equation}\label{cohomology vanishing}
		H^1(X_b,{\sh L}_b) = H^2(X_b,{\sh L}_b) = 0,
		\end{equation}
		and that the central fibre $X_b$ has an elliptic fibration $f:X_b \to Y$, where $Y$ is a smooth projective curve, with the following properties:
		\begin{enumerate}
		\item\label{condition 1} For general $s \in Y$, the natural short exact sequence 
$$ 0 \longrightarrow N_{f^{-1}(s)/X_b} \longrightarrow N_{f^{-1}(s)/X} \longrightarrow T_b B \otimes {\sh O}_{f^{-1}(s)} \longrightarrow 0 $$ 
on $f^{-1}(s)$ is not split;
		\item\label{condition 2} There exists a fixed section $G \subset X_b$ of $f$, such that any divisor $D \in |{\sh L}_b|$ is the sum of $G$ and $\dim |{\sh L}_b| + p_g(Y)$ (mobile) fibres of $f$.
		\end{enumerate}
		Let $t \in B$ general and $(S,{\sh L}) = (X_t,{\sh L}_t)$. If $m_1,m_2,...,m_n \geq 1$, then
		\begin{equation}\label{equation: expected dimension}
		\dim |{\sh L}(m_1,\ldots,m_n)| = \max \left\{-1, \dim |{\sh L}| - \sum_{i=1}^{n} \frac{m_i(m_i+1)}{2} \right\},
		\end{equation}
		with notation as in \eqref{equation: notation for linear system} for $|{\sh L}(m_1,\ldots,m_n)|$.
	\end{proposition}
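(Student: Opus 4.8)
This proposition is essentially \cite[Proposition 3.1]{[Za19]} with the base of the elliptic fibration allowed to have positive genus, so the plan is to run that proof, tracking $p_g(Y)$ in the Riemann--Roch bookkeeping --- which is exactly why the number of mobile fibres in condition \ref{condition 2} is $\dim|{\sh L}_b|+p_g(Y)$ rather than $\dim|{\sh L}_b|+1$. The standard exact sequence relating $|{\sh L}(m_1,\ldots,m_n)|$ to $|{\sh L}|$ shows that the right-hand side of \eqref{equation: expected dimension} is always a lower bound for the left-hand side, so it suffices to prove the opposite inequality for $t$ general. By \eqref{cohomology vanishing} and semicontinuity, $H^1(X_t,{\sh L}_t)=H^2(X_t,{\sh L}_t)=0$ for $t$ in a neighbourhood $B'$ of $b$, hence $\pi_*{\sh L}_X$ is locally free on $B'$ and compatible with base change, and $h^0(X_t,{\sh L}_t)$ is constant --- call it $N+1$ --- since the Euler characteristic is locally constant and $h^1=h^2=0$ near $b$. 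Choose sections $p_i:B'\to X$ so that $p_i(b)$ is general in $X_b$ --- lying off $G$, on a smooth fibre $F_{y_i}$ of $f$, with $y_1,\ldots,y_n\in Y$ distinct --- and so that the tangent direction of $p_i$ at $b$ is general modulo $T_{p_i(b)}F_{y_i}$. Let ${\sh D}\subset{\mathbb P}(\pi_*{\sh L}_X)$ be the (closed) relative linear system of divisors of multiplicity $\geq m_i$ along $p_i(B')$, so ${\sh D}_t=|{\sh L}_t(m_1,\ldots,m_n)|$ evaluated at $p_\bullet(t)$. Since a general point configuration minimizes the dimension of a fat-point system, $\dim|{\sh L}_t(m_1,\ldots,m_n)|$ at general points of $X_t$ is at most $\dim{\sh D}_t$, and $\dim{\sh D}_t\leq\dim{\sh D}_b$ for $t$ general by upper semicontinuity of fibre dimension; so it suffices to show $\dim{\sh D}_b\leq\max\{-1,\,N-\sum_i\binom{m_i+1}{2}\}$.

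Next I would describe the central fibre combinatorially. Since $G$ meets each fibre of $f$ once, $R^0f_*{\sh O}_{X_b}(G)\cong{\sh O}_Y$ and $R^1f_*{\sh O}_{X_b}(G)=0$; by condition \ref{condition 2} the fixed component $G$ splits off, giving ${\sh L}_b\cong{\sh O}_{X_b}(G)\otimes f^*{\sh M}$ for a line bundle ${\sh M}$ on $Y$, and the Leray spectral sequence for $f$ together with \eqref{cohomology vanishing} yields $H^0(X_b,{\sh L}_b)\cong H^0(Y,{\sh M})$ and $H^1(Y,{\sh M})=0$, so $\deg{\sh M}=N+p_g(Y)$. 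Every $D_b\in|{\sh L}_b|$ is $G+f^*E'$ for a unique $E'\in|{\sh M}|$, and since $p_i(b)\notin G$ lies on the smooth fibre $F_{y_i}$, $\mathrm{mult}_{p_i(b)}D_b$ equals the coefficient $k_i$ of $y_i$ in $E'$. A priori, then, ${\sh D}_b$ is contained in $G+f^*|{\sh M}(-\sum_i m_iy_i)|$, whose dimension for $y_\bullet$ general is $\max\{-1,\,N-\sum_i m_i\}$ --- too large, by $\sum_i\binom{m_i}{2}$, so ${\sh D}_b$ must be cut down further.

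The crux is to show that in fact $k_i\geq\binom{m_i+1}{2}$ for every $i$ and every $D_b\in{\sh D}_b$; granting this, ${\sh D}_b\subseteq G+f^*|{\sh M}(-\sum_i\binom{m_i+1}{2}y_i)|$, whose dimension for $y_\bullet$ general is $\max\{-1,\,N-\sum_i\binom{m_i+1}{2}\}$, as needed. Fix $i$ and work near $F_{y_i}\subset X$. As $F_{y_i}$ is a smooth fibre of $X_b\to Y$ we have $N_{F_{y_i}/X_b}={\sh O}_{F_{y_i}}$, and $T_bB\otimes{\sh O}_{F_{y_i}}={\sh O}_{F_{y_i}}$, so condition \ref{condition 1} says $N_{F_{y_i}/X}$ is the non-split self-extension of ${\sh O}_{F_{y_i}}$; equivalently, blowing up $X$ along $F_{y_i}$ produces an exceptional divisor isomorphic to the ruled surface ${\mathbb P}{\sh V}\to F_{y_i}$ of \S\ref{S: 2.2}, with $E_\infty$ corresponding to the subbundle $N_{F_{y_i}/X_b}\subset N_{F_{y_i}/X}$. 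Now a $D_b\in{\sh D}_b$ with $y_i$-coefficient $k_i$ is the limit of divisors $D_t$ of multiplicity $\geq m_i$ at $p_i(t)$; the surface $\bigcup_t D_t\subset X$ meets $X_b$ along $k_iF_{y_i}$, and carrying out the blow-up-and-restrict construction of the proof of Proposition \ref{proposition: former conjecture} --- exactly as in \cite[Proposition 3.1]{[Za19]}, via Lemma \ref{lemma: tangential} and Theorem \ref{theorem: SW} --- exhibits on ${\mathbb P}{\sh V}$ a curve of class $F_q+k_iE_\infty$, for some $q\in F_{y_i}$, with multiplicity $\geq m_i$ at a general point. By Proposition \ref{proposition: former conjecture} this forces $k_i\geq\binom{m_i+1}{2}$, and summing over $i$ completes the argument.

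I expect the crux step to be the main obstacle: one must check carefully that the flat limit $\bigcup_t D_t$, after blowing up along $F_{y_i}$, really does cut out on ${\mathbb P}{\sh V}$ a curve of class $F_q+k_iE_\infty$ with multiplicity $\geq m_i$ at a general point --- that is, that the single-fat-point tangential-cover obstruction of Lemma \ref{lemma: tangential} and Theorem \ref{theorem: SW} is transported faithfully across the degeneration, uniformly in $i$ --- and that the genericity of the $p_i$ keeps the contributions of the distinct fibres $F_{y_i}$ independent, with no base-locus coincidences. Everything else is the bookkeeping of \cite[Proposition 3.1]{[Za19]}, modified only to accommodate $p_g(Y)>0$.
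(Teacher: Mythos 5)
Your strategy is the paper's: degenerate to $X_b$, use Proposition \ref{proposition: former conjecture} to force the limiting divisors to contain each fibre $F_{y_i}$ with coefficient at least $\binom{m_i+1}{2}$, and then bound the resulting subsystem of $|{\sh M}|$ on $Y$ by one-dimensional interpolation at general points of $Y$ (the paper isolates this last step as Lemma \ref{lemma: 1D interpolation}; it is the only ingredient it adds to \cite[Proposition 3.1]{[Za19]}, the rest being cited verbatim). Your bookkeeping of $p_g(Y)$, the identification ${\sh L}_b\cong{\sh O}_{X_b}(G)\otimes f^*{\sh M}$ with $\deg{\sh M}=N+p_g(Y)$, and the role of condition \ref{condition 1} in producing the nonsplit ruled surface of \S\ref{S: 2.2} upon blowing up $F_{y_i}$ are all as in the paper.

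There is, however, one genuine slip in your reduction. You reduce to showing $\dim{\sh D}_b\leq\max\{-1,\,N-\sum_i\binom{m_i+1}{2}\}$, where ${\sh D}_b$ is the fibre over $b$ of the incidence variety of divisors with multiplicity $\geq m_i$ at $p_i$. But by your own description of the central fibre this is false whenever some $m_i\geq 2$: since $p_i(b)\notin G$ lies on the smooth fibre $F_{y_i}$, the multiplicity of $G+f^*E'$ at $p_i(b)$ is exactly the coefficient of $y_i$ in $E'$, so the fat-point condition on $X_b$ imposes only $k_i\geq m_i$, and ${\sh D}_b$ \emph{equals} $G+f^*|{\sh M}(-\sum_i m_iy_i)|$, of dimension $\max\{-1,\,N-\sum_i m_i\}$. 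It cannot be ``cut down further,'' and your crux claim that $k_i\geq\binom{m_i+1}{2}$ for \emph{every} $D_b\in{\sh D}_b$ is not true. The object you must bound is instead the flat limit $P_b$ of the moving systems ${\sh D}_t$, i.e.\ the fibre over $b$ of the closure of ${\sh D}|_{B'\setminus\{b\}}$; this is a proper subvariety of ${\sh D}_b$, its elements are precisely the limits of divisors $D_t$ to which your blow-up argument applies, and upper semicontinuity gives $\dim{\sh D}_t\leq\dim P_b$ for general $t$ just as well. With ${\sh D}_b$ replaced by $P_b$ throughout, your argument becomes the paper's: the containment $P_b\subseteq\{[D]:\mathrm{coeff}_{F_{y_i}}D\geq\binom{m_i+1}{2}\}$ is the next-to-last displayed formula of the proof of \cite[Proposition 3.1]{[Za19]} combined with Proposition \ref{proposition: former conjecture}, and the final dimension count is Lemma \ref{lemma: 1D interpolation}.
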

	
	\begin{proof}
	The fact that the left hand side is at least the right hand side in \eqref{equation: expected dimension} (the dimension is at least the expected dimension) is well-known and trivial, so we only need to prove the reverse inequality. When $Y \simeq {\mathbb P}^1$, this is precisely the combination of \cite[Proposition 3.1]{[Za19]} and Proposition \ref{proposition: former conjecture}. In general, most of the argument in \cite{[Za19]} applies verbatim, though the conclusion requires an additional ingredient, which is precisely the $1$-dimensional interpolation (`SHGH') problem.
	
	\begin{lemma}\label{lemma: 1D interpolation}
		Let ${\sh M} \in \mathrm{Pic}(Y)$, and $p_1,\ldots,p_n \in Y$ \emph{general} points. Then
		$$ \dim |{\sh M}(-D)| = \max \{-1, \dim |{\sh M}| - \deg D \} $$
		for any effective divisor $D$ such that $\mathrm{Supp}(D) \subseteq \{p_1,\ldots,p_n\}$. 
	\end{lemma}
	
	\begin{proof}
	Generalizing to non-complete linear systems, the statement boils down to the case $n=1$, which is the well-known fact that, in characteristic $0$, a linear system on a curve has only finitely many inflection points, e.g. \cite[Proposition 1.1]{[EH83]}. 
	\end{proof}
	
	Let us explain how 1D interpolation must be applied at the end of the argument in \cite{[Za19]} to generalize the proof. Condition \ref{condition 2} implies that there exists ${\sh M} \in \mathrm{Pic}(Y)$ such that $|{\sh M}| \simeq |{\sh L}_b|$ by $D \mapsto G+f^{-1}(D)$ since $\pi^*:\mathrm{Pic}(Y) \to \mathrm{Pic}(X_b)$ is injective by \cite[III, Exercise 12.4]{[Ha77]} and the projection formula. We use the notation in the proof of \cite[Proposition 3.1]{[Za19]}. The next-to-last centred formula states that
	\begin{equation}
		P_b \subseteq \left\{ [D] \in |{\sh L}_b|: \mathrm{coeff}_{E_i} D \geq \lambda(m_i) = \frac{m_i(m_i+1)}{2} \right\}=:T,
	\end{equation}
	that is, the limit in $X_b$ of divisors with high multiplicities at the chosen points must contain the elliptic fibre $E_i$ with multiplicity at least $\lambda(m_i)$, for $i=1,\ldots,n$; please see also Remark \ref{remark: blowing up fibres} below. Indeed, the argument in loc. cit. up to that point requires no changes, and the formula for $\lambda(m_i)$ is our Proposition \ref{proposition: former conjecture}. Under $|{\sh M}| \simeq |{\sh L}_b|$, $T$ corresponds to $ T_Y =  \left\{ D \in |{\sh M}|: \mathrm{coeff}_{p_i} D \geq \lambda(m_i) \right\}$. Then
	\begin{equation*} 
	\begin{aligned} 
	\dim P_b \leq \dim T = \dim T_Y &=  \max \left\{ -1, \dim |{\sh M}| - \sum_{i=1}^{n} \frac{m_i(m_i+1)}{2} \right\} \\
	&= \max \left\{ -1, \dim |{\sh L}_t| - \sum_{i=1}^{n} \frac{m_i(m_i+1)}{2} \right\}
	\end{aligned}
	\end{equation*}
	by Lemma \ref{lemma: 1D interpolation} and $\dim |{\sh M}| = \dim |{\sh L}_b| = \dim|{\sh L}_t|$, and we may conclude as in \cite{[Za19]}. (The situation is thus slightly different when $Y \not\simeq {\mathbb P}^1$, in that the limit divisors are forced to contain some elliptic fibres other than $E_1,\ldots,E_n$, which plays a role in obtaining the correct bound).
	\end{proof}
	
	\begin{remark}\label{remark: blowing up fibres}
	For the reader's convenience, we review that the argument in \cite{[Za19]} entails blowing up the elliptic fibres which contain the (reduced) limits of the fat points in the central fibre. This is reminiscent of the approach of Ciliberto and Miranda to the SHGH Conjecture using degenerations \cite{[CM98], [CM00]}. In our situation, the exceptional divisors of such blowups are isomorphic to the ruled surface in \S\ref{S: 2.2}.
	\end{remark}
	
	It remains to construct specializations with the desired properties.
	
	\subsubsection*{Ruled surfaces (\ref{situation item: split ruled})} It is clear that the group of automorphisms of the surface acts transitively on the set of linear systems we are considering, so all such linear systems behave identically. Let $B = \mathrm{Pic}^0(E)$. Let ${\sh P}$ be the universal (Poincar\'{e}) line bundle over $B \times E = \mathrm{Pic}^0(E) \times E$. Then we define $X = {\mathbb P} ({\sh O}_{B \times E} \oplus {\sh P} )$ and $o = [{\sh O}_E] \in B$. There are two distinguished sections $\Sigma_0,\Sigma_\infty \subset X$ corresponding to the projections to the two terms of ${\sh O}_{B \times E} \oplus {\sh P}$. Let $q \in E$ and $k \geq 1$ integer, and 
	$$ {\sh L}_X = {\sh O}_X(k\Sigma_\infty + \pi^{-1}(B \times \{q\})). $$ 
	Then $X_o = E \times {\mathbb P}^1$, and condition \ref{condition 2} follows easily from the fact that any effective divisor on $E \times {\mathbb P}^1$ with intersection number $1$ with $E \times \{\mathrm{point}\}$ is the sum of fibres of projections to the two factors, which is elementary.
	
	Condition \eqref{cohomology vanishing} follows from the Kodaira vanishing theorem.
		
	To check the condition \ref{condition 1}, let $R \simeq \mathrm{Spec}\mathop{} {\mathbb C}[\epsilon]/(\epsilon^2)$ be the first order thickening of $o=[{\sh O}_E]$ in $B$. Let $y \in {\mathbb P}^1$ such that $E \times \{y\} \not\subset \Sigma_0,\Sigma_\infty$, and let's assume by way of contradiction that the sequence 
	\begin{equation}\label{equation: common ses} 0 \longrightarrow N_{E \times \{y\},X_o} \longrightarrow N_{E \times \{y\},X} \longrightarrow N_{X_o,X}|_{E \times \{y\}} \longrightarrow 0 \end{equation}
	was split. Then $H^0(N_{E \times \{y\},X_o}) \hookrightarrow H^0(N_{E \times \{y\},X})$ is not surjective, and the global sections of $H^0(N_{E \times \{y\},X})$ not coming from global sections of $H^0(N_{E \times \{y\},X_o})$ give a first order deformation of $E \times \{y\} \subset X$ flat over $R$. This deformation is a section of $X|_{R \times E}$, so corresponds to a surjective map $\nu:{\sh O}_{R \times E} \oplus {\sh P}|_{R \times E} \to {\sh Q}$ to a line bundle ${\sh Q}$ on $R \times E$. Clearly, $\deg {\sh Q}_o = 0$. Since we are assuming $E \times \{y\} \not\subset \Sigma_0,\Sigma_\infty$, the restrictions of $\nu$ to ${\sh O}_{R \times E} \oplus 0$ and $0 \oplus {\sh P}|_{R \times E}$ are nonzero on $\{o\} \times E$, and are therefore isomorphisms for degree reasons. Hence, ${\sh Q} \simeq {\sh O}_{R \times E}$ and ${\sh Q} \simeq {\sh P}|_{R \times E}$. However, ${\sh P}|_{R \times E} \not\simeq {\sh O}_{R \times E}$ by definition, which is a contradiction.
	
	\subsubsection*{Abelian surfaces} Let $X \to B$ be a one-parameter family of $(1,d)$-polarized abelian surfaces specializing to a product $E \times F$ of two elliptic curves, as in \cite{[Za22]} (or \cite[\S4]{[BL99]} implicitly). Let us be more specific. If $E \times F$ is polarized by ${\sh J} = {\sh J}_E \boxtimes {\sh J}_F$, with $\deg {\sh J}_E = 1$ and $\deg {\sh J}_F =d$, then $(E \times F, {\sh J})$ is a $(1,d)$-polarized abelian surface. Let ${\modu A}_d$ be the Deligne-Mumford moduli stack of $(1,d)$-polarized abelian surfaces, and $(U,u) \to {\modu A}_d$ an affine (in particular, quasi-projective) \'{e}tale neighbourhood of $(E \times F, {\sh J})$. By the usual Kodaira-Spencer theory, there is a natural injective map
	$$ T_uU = T_{(E \times F, {\sh J})} {\modu A}_d \hookrightarrow H^1(E \times F,{\sh T}_{E \times F}) \simeq {\mathbb C}^4. $$
	By \cite[Lemma 2.2]{[Ch02]}, the image is contained in the annihilator of $c_1({\sh J}) \in H^{1,1}(E \times F)$ relative to the natural pairing
	$$ H^1(E \times F,{\sh T}_{E \times F}) \times H^1(E \times F,\Omega_{E \times F}) \longrightarrow {\mathbb C}, $$
	and hence must coincide with it since $\dim T_uU \geq \dim U = 3$. (To apply \cite[Lemma 2.2]{[Ch02]}, we need to consider a curve through $u$ in $U$.) Then we take $(B,o) \to (U,u)$ to be a smooth curve through $u$ corresponding to a \emph{general} direction $T_oB \hookrightarrow T_uU$, this is possible since $U$ is quasi-projective. (In \cite{[Za22]}, this \emph{first order} generality was not really used and thus not imposed, but here it is needed.)
	
	Condition \eqref{cohomology vanishing} again follows from the Kodaira vanishing theorem. Condition \ref{condition 2} follows from the fact that any divisor $D \in |{\sh J}|$ is of the form 
	$$ D = E \times D' + z \times F, \quad \text{where $|D'| \in |{\sh J}_F|$ and $\{z\} = |{\sh J}_E|$}, $$
which is elementary (e.g. \cite{[BL99]} or \cite{[Za22]}). It remains to check condition \ref{condition 1}, namely, that the short exact sequence notationally identical to \eqref{equation: common ses} is not split. Here, $y \in F$ is completely arbitrary. The proof is very similar to the proof of \cite[Proposition 2.1]{[Ch02]}, so we will be brief. It suffices to check that $H^0(N_{X_o,X}|_{E \times \{y\}}) \to H^1(N_{E \times \{y\},X_o})$ is nonzero. As in \cite[(2.10)]{[Ch02]}, this map factors as
		\begin{equation}\label{equation: composition}
		\begin{aligned}
		H^0(N_{X_o,X}|_{E \times \{y\}}) \simeq T_oB \xrightarrow{\mathrm{ks}} H^1({\sh T}_{E \times F}) &\longrightarrow H^1({\sh T}_{E \times F}|_{E \times \{y\}}) \\ &\longrightarrow H^1(N_{E \times \{y\},X_o}),
		\end{aligned}
		\end{equation}
		where $\mathrm{ks}$ is the Kodaira-Spencer map. It is clear that the composition of the last two maps in \eqref{equation: composition} is surjective, and its kernel is different from the annihilator of $c_1({\sh J})$ considered above -- for instance, under the natural identification $H^1({\sh T}_{E \times F}) \simeq {\mathbb C}^4$, the former is clearly a `coordinate hyperplane', which the latter is not.
		Then the generality of the Kodaira-Spencer map (i.e. of the tangent direction in $U$) implies that \eqref{equation: composition} is nonzero, as desired.
		
		This completes the proof of Theorem \ref{theorem: main theorem}.
		
		\begin{remark}\label{remark: De Volder Laface}
		Theorem \ref{theorem: main theorem} rather vacuously implies the primitive case ($d=1$) of \cite[Conjecture 2.1]{[DL05a]}. Then the primitive case of \cite[Conjecture 2.3]{[DL05a]} follows from \cite[Theorem 3.7]{[DL05a]}. To be pedantic, the equivalence (since one implicitation is trivial) of the two conjectures proved in loc. cit. does not \emph{logically} imply their equivalence for $d=1$, though the proof clarifies that there is absolutely no issue. 
		\end{remark}
	
	\bibliographystyle{plain}
\bibliography{SHGH-KP-bibtex}{}
	
	\end{document}